\tikzset{vertex/.style={circle,draw,fill,inner sep=0pt,minimum size=1mm}}
\tikzset{oriented edge/.style={thick, postaction={decorate,
         decoration={markings,mark=at position .5 with \arrow{angle 90};}}}}
\newtheorem{lem}{Lemma}
\newtheorem{thm}{Theorem}
\newtheorem*{ex}{Example}
\newcommand{\OMM}{\mathrm{OMM}}
\newcommand{\tr}{\mathrm{tr}}
\begin{document}

\title [Periods of orbits for maps on graphs] {Periods of orbits for maps on graphs homotopic to a constant map}

\author[Bernhardt]{Chris Bernhardt}

 \address{Department of mathematics and computer science\\Fairfield
University\\Fairfield\\CT 06824}
\email{cbernhardt@fairfield.edu}

 \author[Gaslowitz]{Zach Gaslowitz}
 \address{Department of mathematics\\Harvey Mudd College\\340 East Foothill Blvd.\\
Claremont\\California 91711}
 \email{Zachary\_Gaslowitz@hmc.edu}
 
 \author[Johnson]{Adriana Johnson}
 
 \address{Department of mathematics\\ Bard College\\PO Box 5000\\Annandale-on-Hudson\\NY 12504-5000}
 \email{aj957@bard.edu}
 
 \author[Radil]{Whitney Radil}
 
 \address{ Department of mathematics\\College of Saint Benedict\\
37 South College Avenue\\St. Joseph, Minnesota 56374}
\email{wiradil@csbsju.edu}

 \thanks{*This work was done as part of an REU at Fairfield University supported by the National Science Foundation and the Department of Defense under Grant No. 1004346.}

 \thanks{The authors thank the referees for valuable suggestions and corrections that greatly improved the clarity of the paper.}
\subjclass[2000]{37E15, 37E25, 37E45}

\keywords{graphs, vertex maps, periodic orbits, homotopic to constant map}

\begin{abstract}
The paper proves two theorems concerning the set of periods of periodic orbits for maps of graphs that are homotopic to the constant map and such that the vertices form a periodic orbit. The first result is that if the number of vertices  is not a divisor of $2^k$ then there must be a periodic point with period $2^k$.  
The second is that if the number of vertices is $2^ks$  for odd $s>1$, then for all $r>s$ there exists a periodic point of minimum period $2^k r$. These results are then compared to the Sharkovsky ordering of the positive integers.

\end{abstract}

\maketitle


\section{Introduction}

A vertex map on a graph with $v$ vertices is a continuous map that permutes the
vertices. Given a vertex map, the periods of the periodic orbits can
be computed giving a subset of the positive integers. One of the
basic questions of combinatorial dynamics for vertex maps is to
determine which subsets of the positive integers can be obtained in
this way. Sharkovsky's theorem \cite{S} is a well-known result about the periods of periodic orbits for maps on the real line or the interval. It provides the 
answer when the underlying graph is topologically an
interval and the vertices all belong to the same periodic orbit. In this case  the map must have a periodic orbit of period $m$ for any $m$ satisfying $m \triangleleft v$, where \[1 \triangleleft 2 \triangleleft 4 \triangleleft \dots \, \,  \dots \triangleleft 2^27 \triangleleft 2^25 \triangleleft 2^23 \triangleleft \dots 2^17 \triangleleft 2^15 \triangleleft 2^13 \dots 7 \triangleleft 5 \triangleleft 3.\]

In \cite{BGMY}, Block, Guckenheimer, Misiurewicz and Young gave what has now become the standard approach to proving Sharkovsky's theorem using directed graphs. Among other results that were proved in the paper was an extension of Sharkovsky's Theorem to degree zero maps of the circle. They showed that if a degree zero map of the circle has a periodic point of period $v$ then it must also have one of period $m$ for all $m \triangleleft v$. Again this can be considered as a result for vertex maps on graphs where the graph is topologically a circle and the periodic points of period $v$ form the vertices. (A good introduction to combinatorial one-dimensional dynamics that contains these results amongst a wealth of others is \cite{ALM}.)

After studying maps on the interval and on circles it was natural to ask similar questions for maps on trees and then on general graphs. In what follows we will always make the assumption that the underlying map permutes the vertices. A more general approach is not to have this restriction. This more general approach has been taken by a number of authors. In a sequence of papers culminating in \cite{AJM}, Alsed\`a,  Juher and Mumbr\'u proved the general result for trees. For maps on graphs using this more general approach see \cite{{AR}, {LL}, {LPR}}.

In \cite{B2,B3} a Sharkovsky-type theorem was proved  for vertex maps on trees, maps for which the vertices form one periodic orbit. This ordering is a partial ordering, not a linear ordering -- some of the relations in the Sharkovsky ordering have to be deleted. A natural question is to ask whether this ordering also holds for vertex maps of general graphs if we restrict the underlying map to be homotopic to a constant map. 

We have not been able to completely prove this, but we do prove two results for the case when the vertices form one periodic orbit with period $v$: the first result is that if $v$ is not a divisor of $2^k$ then there must be a periodic point with period $2^k$;  and the second is that if $v=2^ks$  for odd $s>1$, then for all $r>s$ there exists a periodic point of minimum period $2^k r$. In the final section we show that our results are quite strong. The set of periods forced by a given $v$ with respect to the results in our paper, those given by the tree ordering, and those given by the Sharkovsky ordering differ at most by a finite number of periods.

 In \cite{BGMY}, Block et al. proved their result for circles by looking at the universal cover and periodic orbits of lifts of the original map to the universal cover. This approach does not extend easily in the case considered in this paper. Though the universal cover is a tree, the vertices of the tree can be pre-periodic points and not periodic points. This means that we cannot simply apply the tree result from \cite{B3}. Our approach is similar to that in \cite{B1, B2, B3} using trace arguments for Oriented Markov Matrices.

There is a close connection between combinatorial dynamics and algebraic topology. As noted in \cite{B1} several of the ideas that we express in dynamical terminology could be expressed in terms of algebraic topology. In particular, the Oriented Markov Matrix introduced in section $5$ is the matrix corresponding to the induced map on $1$-chains, the vectors in section $6$ are the coordinate vectors for $1$-chains, and Theorem $1$ in section $7$ is closely related to the Lefschetz number of the map. However, we give elementary proofs for all results. We begin by introducing the basic concepts, starting with graphs.

\section{Graphs}

We are considering finite connected \emph{graphs}, whose \emph{edges} are real closed intervals, the endpoints of which are the \emph{vertices}.
Any two edges are pairwise disjoint, except possibly at their endpoints. We allow the possibility of more than one edge connecting the same two vertices, but we do not allow {\em loops}, edges that connect a vertex to itself. (What we are calling graphs are sometimes referred to as {\em multigraphs}.) We say a graph $G$ has $n$ edges and $v$ vertices.

For each edge $E_i$, an \emph{orientation} is assigned. Orientation is defined by the vertices that bound the edge: one will be considered the initial vertex, and the other the final vertex. If orientation is reversed, the edge will be written as $-E_i$, and the initial and final vertices are switched.

\begin{ex}
Let $\hat{G}$ be the graph represented below. Orientation is indicated by the arrows.

\begin{center}
	\begin{tikzpicture}[auto]
		\node[vertex] (one)   at (0,0)    [label=left :$v_1$] {};
		\node[vertex] (two)   at (2,1.5)  [label=above:$v_2$] {};
		\node[vertex] (three) at (4,0)    [label=above:$v_3$] {};
		\node[vertex] (four)  at (7,0)    [label=above:$v_4$] {};
		\node[vertex] (five)  at (2,-1.5) [label=below:$v_5$] {};
		
		\draw[oriented edge] (one)   to node      {$E_1$} (two);
		\draw[oriented edge] (five)  to node      {$E_2$} (one);
		\draw[oriented edge] (two)   to node      {$E_3$} (three);
		\draw[oriented edge] (two)   to node      {$E_4$} (five);
		\draw[oriented edge] (four)  to node      {$E_5$} (three);
		\draw[oriented edge] (five)  to node[swap]{$E_6$} (three);
	\end{tikzpicture}
\end{center}

\noindent
We will be using $\hat{G}$ in the examples that follow.
\end{ex}

Given any two vertices, $v_a$ and $v_b$, a \emph{path} from $v_a$ to $v_b$ is a sequence of edges $E_{i_1}, \ldots, E_{i_q}$ where the initial vertex of $E_{i_1}$ is $v_a$, the final vertex of $E_{i_q}$ is $v_b$ and the final vertex of $E_{i_r}$ is the initial vertex of $E_{i_{r+1}}$ for $1 \le r < q$.
If $E_p$ and $-E_p$ are two consecutive edges in a path, we can obtain a shorter path by omitting these two edges.
We will call this a \emph{contraction} of the path.
Given any path from vertex $v_a$ to vertex $v_b$ we can form a sequence of contractions resulting in a unique path that cannot be contracted further.
We call this resulting path \emph{reduced}. 

We adopt the standard graph theory term for a \emph{cycle}. A \emph{cycle} in a graph $G$ is a closed path with no repeated vertices or edges.

\section{Maps}
In this paper, we will consider continuous maps $F$  that act on $G$ and permute the vertices by a permutation $\theta$. 
We say the permutation $\theta$ is a \emph{cycle} if the vertices of $G$ form one periodic orbit. We will use cycle notation for permutations. Thus $(1,2,3,4)$ means $1$ gets mapped to $2$, $2$ to $3$, $3$ to $4$ and $4$ back to $1$.

Each edge in the graph is homeomorphic to the unit interval. We use the homeomorphism to define the distance between points in an edge and to give each edge unit length. A path consisting of $m$ edges is defined to have length $m$ in the obvious way. Suppose that an edge $E_i$ is mapped by $F$ to a path with $m$ edges, then there is a natural induced map $F^*:[0,1] \to [0,m]$. We will say that $F$ is linear on $E_i$ if $F^*$ is linear.

We now define the {\em linearization} of the map $F$, which we will denote by $f$.  For all vertices $v \in G$, we define $f(v)=F(v)$. If $E_i$ is an edge with endpoints $v_a$ and $v_b$, we define $f$ to map $E_i$ linearly onto the reduced path from $v_a$ to $v_b$ that is obtained from $F(E_i)$.

More formally, let $[0,1]=I$, we define $f:G  \to G$ to be the {\em linearization} of $F$ if for each edge $E_i$ there is homotopy $h_i:E_i \times I \to G $ which has the following properties : $h_i(x,0)=F(x)$ for all $x \in E_i$; $h_i(x,1)=f(x)$ for all $x \in E_i$; $h_i(v_a, t)=F(v_a)=f(v_a)$ for all $t \in I$; $h_i(v_b, t)=F(v_b)=f(v_b)$ for all $t \in I$; and such that $f$ is linear on $E_i$.
If $f$ is the linearization of a map $F$, we say $f$ is \emph{linearized}.
We will consider linearized maps throughout the paper. (In the literature the maps that we are calling linearized are sometimes referred to as {\em linear models} for tree maps or {\em connect-the-dots} maps for interval maps, see \cite{AGLMM, ALM}.) The connections between the sets of periodic points for $f$ and $F$ will be discussed in the next section.

In addition to thinking of $f$ as a map from $G$ to itself, we can also consider it as a map from paths in $G$ to paths in $G$. In this sense, the image of an edge will be a path. The image of a reversed edge is the reverse path.


\begin{ex}
We continue with our example. Suppose $f: \hat{G} \to \hat{G}$ permutes the vertices by $\theta=(v_1,v_2,v_3,v_4,v_5)$. Then $f(v_1)=v_2$, $f(v_2)=v_3$, etc. Though the images of the vertices are defined by $\theta$, there are many possibilities for the image of edges. For example, $f(E_1)$, considered as a path, must have endpoints $v_2$ and $v_3$, but its image could be any of the following:

\begin{align*}
f(E_1)&=E_3\\
&\text{OR}\\
&=E_4E_6\\
&\text{OR}\\
&=-E_1{-E_2}E_6{-E_3}E_4E_6\\
&etc.
\end{align*}
However, since $f$ is linearized, we could not have $f(E_1) = E_4E_6{-E_5}E_5$, since its image could be contracted further.

In what follows, we will take
\begin{align*}
\ \ \ \ \ \ \ \ \ \ \ \ \ \ \ \ \ \ \ \ \ \ \ \ &f(E_1)=E_3\\
&f(E_2)=-E_2E_6{-E_3}\\
&f(E_3)=-E_5\\
&f(E_4)=-E_6E_2\\
&f(E_5)=E_2E_1E_3{-E_6}{-E_4}{-E_1}{-E_2}E_6{-E_5}\\
&f(E_6)={-E_2}E_6{-E_5} \ .
\end{align*}

\end{ex}


\section{Periodic Points}
\label{sec:PeriodicPoints}
The map $f^r(x)$ refers to the map given by composing $f$ with itself $r$ times, so, for example, $f^2(x)=f(f(x))$.
We say that a point $x \in G$ is a \emph{periodic point} under $f$ if there exists a positive integer $p$ such that $f^p(x)=x$.
Any such $p$ is said to be a period of $x$, and the smallest period is known as that point's \emph{minimum period}.

Our main tool for finding periodic points is an application of the following lemma which stems from the the Intermediate Value Theorem.
\begin{lem}
For any closed real interval $I$, any continuous map from $I$ to itself will have a fixed point.
\end{lem}

Given a graph $G$ and a linearized map $f$ that permutes the vertices, we construct an {\em Oriented Markov Graph}. The vertices of the OMG correspond to the edges of $G$. A directed, oriented edge will be drawn from one OMG vertex, $E_j$ to another, $E_i$, if $E_j$ has a closed subinterval that maps entirely onto $E_i$. A directed edge will be drawn for each such closed subinterval. The orientation of the edge will be positive (resp. negative) if the subinterval gets mapped onto $E_j$ with positive (resp. negative) orientation. Though we will not use the term, in the literature, if $E_j$ contains a closed subinterval with image equal to $E_i$ it is said that $E_j$ {\em f-covers} $E_i$. Below we sketch standard results for Markov Graphs and refer the reader to \cite{ALM} for formal proofs that are stated in terms of $f$-covers.

Given an Oriented Markov Graph, we can define a sequence $E_{i_0}E_{i_1}\cdots E_{i_d}$ to be a \emph{walk} of length $d$ in the OMG, where  each edge $E_{i_k}$ in the sequence will have an edge connecting it to $E_{i_{k+1}}$ in OMG, or equivalently, there is a closed subinterval  of $E_{i_k}$ that gets mapped exactly onto $E_{i_{k+1}}$. We call a walk \emph{closed} if its first and last edge are equal, discounting orientation.

Closed walks are useful because if there is a closed walk of length $d$ from edge $E_k$ to itself in the OMG, then there is a periodic point of period $d$ in the graph. 
This is because if $E_{i_0}E_{i_1}\cdots E_{i_{d-1}}E_{i_d}$ is a closed walk with $E_{i_0}=E_{i_d}$
then we know that there is a subinterval $J_{d-1}$ in $E_{i_{d-1}}$ such that $f(J_{d-1})=E_{i_{d}}=E_{i_0}$. We can then find a subinterval $J_{d-2}$ in $E_{i_{d-2}}$ such that $f(J_{d-2})=J_{d-1}$. We proceed inductively until we obtain a subinterval $J_{0}$ of $E_{i_0}$ with the property that $f^d(J_0)=E_{i_{d}}=E_{i_0}$. Since $J_0 \subseteq E_{i_0}$ and $f^d(J_0)=E_{i_0}$, it follows from Lemma 1 that $f^d$ must have a fixed point in $E_{i_0}$. There could be more than one fixed point, of course, but there must be at least one. 
 
We have shown that closed walks in the Oriented Markov Graph give us information about the periodic points of the linearized map $f$. Suppose that $f$ is the linearization of $F$, then  these closed walks also give us information about periodic points of $F$. This follows from the fact that if there is a subinterval of $E_{i_k}$ that gets mapped exactly onto $E_{i_{k+1}}$ by $f$, then there must be a subinterval of $E_{i_k}$ that gets mapped exactly onto $E_{i_{k+1}}$ by $F$.

 All of our arguments for periodic orbits will consider these closed walks and will give both periodic orbits for $F$ and for $f$. It should be noted that the periodic points of $f$ correspond to closed walks in the OMG, but $F$ could have periodic points of other periods in addition to those given by closed walks. It should also be noted that in the proof above it is important that the edges are closed intervals and that they are not allowed to contain vertices in their interiors.


\section{Oriented Markov Matrices}
\label{sec:OMM}
We will denote the Oriented Markov Matrix of $f$ as $\OMM(f)=M$.
The Oriented Markov Matrix is an $n\times n$ matrix such that an element $a_{ij}$ represents the number of times the edge $E_j$ maps to the edge $E_i$ with positive orientation minus the number of times $E_j$ maps to $E_i$ with negative orientation. In terms of the OMG, $a_{ij}$ is equal to the number of positive directed edges from $E_j$  to $E_i$ minus the number of negative directed edges from $E_j$  to $E_i$.

\begin{lem}
If $M$ is the Oriented Markov Matrix of a map $f$, then the $ij^\text{th}$ entry of $M^k$ counts the number of positively oriented walks minus the number of negatively oriented walks from $E_j$ to $E_i$ of length $k$.
\end{lem}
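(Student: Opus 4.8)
The plan is to prove this by induction on $k$, exploiting the fact that matrix multiplication naturally accumulates the signed counting data that the entries are meant to encode. The base case $k=1$ is simply the definition of the Oriented Markov Matrix: the entry $a_{ij}$ of $M=M^1$ is, by definition, the number of positively oriented directed edges from $E_j$ to $E_i$ in the OMG minus the number of negatively oriented ones, and a walk of length $1$ from $E_j$ to $E_i$ is exactly such a directed edge.

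For the inductive step, I would assume the statement holds for $M^k$ and consider $M^{k+1}=M^k\,M$. Writing $(M^{k+1})_{ij}=\sum_{\ell} (M^k)_{i\ell}\,(M)_{\ell j}$, the strategy is to interpret each term of this sum combinatorially. By the inductive hypothesis, $(M^k)_{i\ell}$ is the signed count (positive minus negative) of walks of length $k$ from $E_\ell$ to $E_i$, and by the base case $(M)_{\ell j}$ is the signed count of walks of length $1$ (single directed edges) from $E_j$ to $E_\ell$. Every walk of length $k+1$ from $E_j$ to $E_i$ factors uniquely as a first edge from $E_j$ to some intermediate vertex $E_\ell$, followed by a walk of length $k$ from $E_\ell$ to $E_i$, so summing over $\ell$ recovers all such walks exactly once.

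The one point that genuinely requires care is the bookkeeping of the \emph{orientations}, since this is a signed count rather than a raw count of walks. Here I would argue that the orientation of a concatenated walk is the product of the orientations of its two pieces: traversing a positive edge followed by a positively oriented walk yields a positive walk, a positive edge followed by a negative walk yields a negative walk, and so on, matching the usual sign rule. Consequently, in the product $(M^k)_{i\ell}\,(M)_{\ell j}$, the four sign combinations (pos-pos, pos-neg, neg-pos, neg-neg) distribute exactly so that positive times positive and negative times negative contribute to the positively oriented walks while the mixed products contribute to the negatively oriented ones. This is precisely the algebraic identity $(p-n)(p'-n')=(pp'+nn')-(pn'+np')$, where the first parenthesized group counts the resulting positively oriented walks through $E_\ell$ and the second counts the negatively oriented ones. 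Summing over all intermediate $E_\ell$ then gives the total signed count of length-$(k+1)$ walks from $E_j$ to $E_i$, completing the induction.

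The main obstacle, modest though it is, is making the sign convention airtight: one must verify that ``positive orientation'' of a walk is genuinely multiplicative under concatenation, so that the purely algebraic sign cancellations in the matrix product align with the geometric orientations of the concatenated walks. Once that multiplicativity is stated cleanly, the rest is a routine unwinding of the definition of matrix multiplication, and no further structure of the graph $G$ or the map $f$ is needed.
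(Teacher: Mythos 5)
Your proposal is correct and follows essentially the same route as the paper: induction on walk length, decomposing a walk at one end through an intermediate edge of $G$ and using the sign identity $(p-n)(p'-n')=(pp'+nn')-(pn'+np')$ to match the algebraic cancellations with concatenated orientations. The only (immaterial) difference is that you peel off the \emph{first} step via $M^{k+1}=M^kM$, while the paper peels off the \emph{last} step via $M^{s+1}=MM^s$.
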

\begin{proof}
It is clear from the construction that this holds when $k=1$.

Assume that it holds for $k=s$.
By definition,
\[
	(M^{s+1})_{ij}=(MM^s)_{ij}=\sum_{r=1}^n M_{ir}(M^s)_{rj}\ .
\]
By hypothesis, $(M^s)_{rj}$ counts the number, $p_s$, of positive walks of length $s$ from $j$ to $r$ minus the number of negative ones, $n_s$.
Similarly, $M_{ir}$ counts the number of positive walks, $p_1$, minus the number of negative walks, $n_1$, of length one from $r$ to $i$.
Thus,
\[
	M_{ir}(M^s)_{rj}=(p_1-n_1)(p_s-n_s)=(p_sp_1+n_sn_1)-(n_sp_1+p_sn_1)
\]
 counts the number of positively oriented length $s+1$ walks from $E_j$ to $E_i$ whose second to last step is $E_r$ minus the negatively oriented ones.

By summing over all possible $r$, we have counted all length $s+1$ walks from $E_j$ to $E_i$.
Thus, this lemma holds for $k=s+1$, so by induction, the lemma holds for all positive integers $k$.
\end{proof}

We will also need to use the following result. See \cite{B1} for proof.

\begin{lem}
If $M$ is the Oriented Markov Matrix for a map $f$, then $M^k$ is the Oriented Markov Matrix of $f^k$ for all positive integers $k$.
\end{lem}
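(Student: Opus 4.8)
The plan is to show the two matrices agree entry-by-entry by matching the combinatorial meaning of $(M^k)_{ij}$ supplied by Lemma 2 against the geometric meaning of $\OMM(f^k)_{ij}$. By definition the $(i,j)$ entry of $\OMM(f^k)$ is the number of closed subintervals of $E_j$ mapping onto $E_i$ with positive orientation under $f^k$ minus the number mapping with negative orientation, so it suffices to produce a sign-preserving bijection between such subintervals and the length-$k$ walks from $E_j$ to $E_i$ in the OMG; Lemma 2 then identifies the signed count of those walks with $(M^k)_{ij}$. Equivalently, one may phrase the whole argument as an induction on $k$, the inductive step being the composition identity $\OMM(f\circ f^{k})=\OMM(f)\,\OMM(f^{k})$ together with $M\cdot M^{k}=M^{k+1}$; the content of that identity is the same bijection.

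First I would describe $f^k$ on a single edge as tracing out a path. The subintervals of $E_j$ that cover $E_i$ under $f^k$ correspond exactly to the appearances of $\pm E_i$ along the (generally \emph{unreduced}) path that $f^k$ traces as $E_j$ is traversed once, the sign recording the local orientation. The key point is that this path is built recursively: it is obtained from the path $f^{k-1}(E_j)$ by replacing each occurring edge $\pm E_r$ with $\pm f(E_r)$ and concatenating, and the orientation of a resulting covering is the product of the two signs. Iterating this from $k=1$ shows that the edges of $f^k(E_j)$ are indexed precisely by the length-$k$ walks starting at $E_j$, each edge carrying the sign equal to the product of the orientations of the steps of its walk. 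Restricting to the edge $E_i$ gives the desired sign-preserving bijection.

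The step I expect to be the main obstacle is the bookkeeping of orientations together with the distinction between reduced and unreduced paths. The subtlety is that $\OMM(f^k)_{ij}$ must be read off from the \emph{unreduced} concatenation $f^k(E_j)$ and not from its reduction: if $f^k$ carries a subinterval of $E_j$ out across $E_i$ and back, this produces a genuine $+E_i$ covering and a genuine $-E_i$ covering, and these cancel only after passing to the signed count. The same cancellation occurs on the matrix side, where two length-$k$ walks of opposite sign cancel in $(M^k)_{ij}$; so I would set up the correspondence at the level of individual oriented coverings and individual walks before summing, making the cancellations on the two sides manifestly identical. The one computation to verify is that the orientation of a composed covering is the product of the orientations of its constituent single steps, which is exactly what forces the signs to multiply correctly under matrix multiplication.
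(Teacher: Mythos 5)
Your proof is correct, and it is worth noting at the outset that the paper itself does not prove this lemma at all---it defers to \cite{B1}---so your argument supplies something the present text only cites. The two points on which the whole statement genuinely turns are exactly the ones you isolate: (i) the subintervals of $E_j$ covering $E_i$ under $f^k$ are indexed by the appearances of $\pm E_i$ in the \emph{unreduced} path obtained by substituting the reduced paths $f(E_r)$ into the traced path of $f^{k-1}(E_j)$, with orientations multiplying, which is precisely the sign-preserving bijection with length-$k$ walks that Lemma 2 then converts into $(M^k)_{ij}$; and (ii) the signed count of appearances of an edge is invariant under contraction, since each contraction deletes one positive and one negative appearance of the same edge, so it does not matter whether one reads $\OMM(f^k)_{ij}$ from the unreduced concatenation or from its reduction. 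Point (ii) is not optional bookkeeping: at the level of \emph{unsigned} Markov matrices the corresponding identity is false in general, since a fold across $E_i$ creates coverings that only disappear in the signed count, so your insistence on matching individual oriented coverings with individual signed walks before summing is what makes the argument sound.

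For comparison, a more compact route to the same result uses the observation the paper already makes in Section 6: if $\vec u$ is the vector of a path, then $M\vec u$ is the vector of its image under $f$ (contraction-invariance is absorbed into that statement). Since the $j$th column of $\OMM(f^k)$ is the vector of the traced path $f^k(E_j)=f\bigl(f^{k-1}(E_j)\bigr)$, induction on $k$ gives $\OMM(f^k)=M\cdot\OMM(f^{k-1})=M^k$ column by column. This is exactly your inductive variant $\OMM(f\circ f^{k-1})=\OMM(f)\,\OMM(f^{k-1})$, with the explicit bijection replaced by linearity of the action on $1$-chains; your version is more elementary and makes the cancellation mechanism visible, while the column argument is shorter once the Section 6 remark is granted.
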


These lemmas suggest that these Oriented Markov Matrices will be very useful tools in proving the existence of the walks that were discussed in section \ref{sec:PeriodicPoints} on periodic points.

\begin{ex}
\noindent
The corresponding Oriented Markov Matrix for $f:\hat G \to \hat G$ is
\begin{center}
$M = \begin{bmatrix}
0 & 0& 0& 0& 0& 0\\
0 &-1& 0& 1& 0&-1\\
1& -1& 0& 0& 1& 0\\
0 & 0& 0& 0&-1& 0\\
0 & 0&-1& 0&-1&-1\\
0 & 1& 0&-1& 0 & 1
\end{bmatrix}$\ .
\end{center}
\end{ex}

 If an element $a_{ij}$ in $M^r$ is non-zero, then there is at least one closed walk of length $r$ from edge $E_j$ to edge $E_i$. Non-zero entries in the diagonal of $M^r$ represent closed walks of length $r$ from an edge to itself. So the trace of $M^r$ represents the number of times edges in $G$ map to themselves with positive orientation minus the number of times they map to themselves with negative orientation with length $r$. We also note that whether or not an edge maps to itself in an orientation preserving or reversing way is independent of the orientation chosen for that edge. This means that the diagonal entries in powers of $M$ do not depend on the choice of the orientation for edges in $G$.


\section{Cycles}

In what follows we shall be studying maps from graphs to themselves that are homotopic to a constant map.
We will call such a map an \emph{HTC} map.

It is clear that a map from a graph $G$ is HTC if and only if the image of every cycle in the graph  can be contracted to the empty path.

\begin{ex}
\noindent
Note that there are three cycles in the graph: 

\begin{center}
$c_1=E_1E_4E_2$, $c_2=-E_3E_4E_6$, and $c_3=E_1E_3{-E_6}E_2$.
\end{center}

\noindent
 For $f$ to be homotopic to the constant map, we must show that the images of these cycles collapse.

\begin{align*}
f(c_1)&=f(E_1E_4E_2)\\
&=f(E_1)f(E_4)f(E_2)\\
&=(E_3)({-E_6}E_2)({-E_2}E_6{-E_3})\\
&=E_3{-E_6}E_2{-E_2}E_6{-E_3}
\end{align*}

\begin{center}
We can now  collapse the edges in this sequence:
\end{center}
\begin{align*}
\ \ \ \ &\sim E_3{-E_6}\mathbf{(E_2{-E_2})}E_6{-E_3}\\
\ \ \ \ &\sim E_3\mathbf{(-E_6E_6)}{-E_3}\\
\ \ \ \ &\sim \mathbf{(E_3{-E_3})}\\
\ \ \ \ &\sim \emptyset
\end{align*}
And the same can be done with the other cycles. So the map is HTC.

\end{ex}

To each path in the graph $G$ we associate an $n$-dimensional vector. The $k$-th component of the vector 
 counts the number of times the edge $E_k$ appears in the path with positive orientation minus the number of times $E_k$ appears in the path with negative orientation. Notice that if $\vec u$ is such a vector, then $M\vec u$ will give a vector that corresponds to the image of the path corresponding to $\vec u$ under $f$ .

For a tree, the number of edges is equal to one less than the number of vertices, $n=v-1$.
If $n>v-1$, then there is at least one cycle in the graph.

Given a graph with $n>v-1$, choose a spanning tree. For each edge that is not in the spanning tree we can form a cycle consisting of that edge and the remaining edges taken from the tree. 
It is clear that the vectors associated to these cycles  are linearly independent. In fact the vector associated to any closed path can be written as a linear combination of these $c=n-(v-1)$ vectors. This is a standard result from homology where it is seen that these vectors generate the group of $1$-cycles, see \cite{H}, for example.


We let $W=\{\vec{w_1}, \vec{w_2}, \cdots, \vec{w_c}\}$ denote the linearly independent set of vectors that correspond to these cycles. Since cycles collapse, their image is the empty path, and so $M\vec{w_j} = \vec{0}$.

\begin{ex}
In our example, the vectors corresponding to $c_1$ and $c_2$ give two linearly independent vectors, $\vec w_1^T=[1,1,0,1,0,0]$, $\vec w_2^T=[0,0,-1,1,0,1]$. The vector associated to $c_3$ is equal to $\vec w_1 - \vec w_2$. We can take $W=\{\vec w_1,\vec w_2\}$ as the linearly independent set of vectors.
\[
M \vec w_1 =
\begin{bmatrix}
0 & 0& 0& 0& 0& 0\\
0 &-1& 0& 1& 0&-1\\
1& -1& 0& 0& 1& 0\\
0 & 0& 0& 0&-1& 0\\
0 & 0&-1& 0&-1&-1\\
0 & 1& 0&-1& 0& 1
\end{bmatrix}
\begin{bmatrix}
1\\1\\0\\1\\0\\0
\end{bmatrix}
=\begin{bmatrix}
0\\0\\0\\0\\0\\0
\end{bmatrix}
\]

\end{ex}


\section{Trace theorems for Oriented Markov Matrices}

In this section we will prove results concerning the traces of the Oriented Markov Matrices of HTC maps on $G$. These results will be used in the following section to prove the main results. First, we state a result that we will need about maps on trees. This was proved in \cite{B1}. We give a proof here to aid the exposition.

\begin{lem}
Given a tree $T$ with $v$ vertices and a map $f:T \to T$ that permutes the vertices, if none of the vertices are fixed under $f$, then the trace of the Oriented Markov Matrix  is $-1$.
\end{lem}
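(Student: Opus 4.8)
The plan is to recognize the Oriented Markov Matrix $M$ as the matrix of $f$ induced on $1$-chains and to compare it with the permutation matrix $P$ induced on $0$-chains (the vertices), the comparison being carried out through the boundary operator. Write $C_1 \cong \mathbb{R}^n$ for the space of $1$-chains and $C_0 \cong \mathbb{R}^v$ for the space of $0$-chains, and let $\partial\colon C_1 \to C_0$ be the boundary map sending an oriented edge to (final vertex) $-$ (initial vertex). Since $f$ sends an edge $E_i$ with endpoints $v_a,v_b$ to a reduced path from $\theta(v_a)$ to $\theta(v_b)$, and the boundary of any path depends only on its endpoints, one gets the commutation $\partial M = P\partial$ on the basis of edges, hence everywhere. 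The goal is to turn this into the numerical identity $\tr(P) = \tr(M) + 1$; since no vertex is fixed, $\tr(P)$ equals the number of fixed vertices, which is $0$, and the identity then gives $\tr(M) = -1$.

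First I would establish that $\partial$ is injective. Its kernel is exactly the space of $1$-cycles, and for a tree this space is trivial (as already noted in the Cycles section, $n = v-1$ forces $c = n-(v-1) = 0$ independent cycle vectors). Next I would identify the image of $\partial$. Letting $\epsilon\colon C_0 \to \mathbb{R}$ be the augmentation $\epsilon\big(\sum a_i v_i\big) = \sum a_i$, each $\partial E_i$ lies in $\ker\epsilon$, and since $\partial$ is injective its image has dimension $n = v-1 = \dim\ker\epsilon$; hence $\mathrm{im}\,\partial = \ker\epsilon$. Thus $\partial$ restricts to an isomorphism $C_1 \xrightarrow{\ \sim\ } \ker\epsilon$ intertwining $M$ with $P|_{\ker\epsilon}$, so that $\tr(M) = \tr\big(P|_{\ker\epsilon}\big)$.

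It remains to account for the complementary piece. Because any permutation preserves the coordinate sum, $P$ preserves $\ker\epsilon$ and acts as the identity on the quotient $C_0/\ker\epsilon \cong \mathbb{R}$. Trace is additive along the short exact sequence $0 \to \ker\epsilon \to C_0 \to \mathbb{R} \to 0$, giving $\tr(P) = \tr\big(P|_{\ker\epsilon}\big) + 1$. Combining this with the previous paragraph yields $\tr(P) = \tr(M) + 1$, and the hypothesis of no fixed vertices then finishes the argument.

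The step I expect to be most delicate is the bookkeeping of the constant ``$+1$'': it is the contribution of the connected-component (degree) part that is invisible on $1$-chains, and getting its sign and multiplicity right is exactly what produces $-1$ rather than $0$. A reader preferring a fully hands-on computation could instead root the tree, take the parent edge of each non-root vertex as a basis for $C_1$, and compute each diagonal entry of $M$ directly from the unique reduced tree-path joining $\theta(v_a)$ to $\theta(v_b)$; the trace then collapses to a telescoping count governed by whether $\theta$ moves a vertex into or out of the subtree below its parent edge. I would present the boundary-operator argument as primary, since it is short and transparent, and relegate the direct computation to a remark.
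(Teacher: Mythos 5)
Your proof is correct, but it takes a genuinely different route from the paper's. You prove the lemma as a chain-level Hopf trace (Lefschetz) computation: the key identity $\partial M = P\partial$ is valid because the $i$-th column of $M$ is exactly the $1$-chain vector of the reduced path $f(E_i)$ (the identification the paper itself makes in Section 6 when it observes that $M\vec u$ is the vector of the image path), and the boundary of any path telescopes to final minus initial vertex; the remaining steps --- injectivity of $\partial$ from the triviality of the cycle space of a tree, the dimension count $\mathrm{im}\,\partial = \ker\epsilon$, and additivity of the trace along $0\to\ker\epsilon\to C_0\to\mathbb{R}\to 0$ with $P$ acting as the identity on the quotient --- are all sound and yield $\tr(P)=\tr(M)+1$, hence $\tr(M)=-1$ when no vertex is fixed. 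The paper argues instead by an elementary double count: for each vertex $v_i$ it places a dot on the first edge of the reduced path from $v_i$ to $f(v_i)$, checks that edge $E_i$ carries exactly $1-M_{ii}$ dots (two if $-E_i$ occurs in $f(E_i)$, none if $E_i$ does, one otherwise), and equates the two counts to get $v = e - \tr(M)$, so $\tr(M)=e-v=-1$. What each approach buys: yours makes the constant transparent as the $H_0$ contribution, in fact proves the sharper statement that $\tr(M)$ equals the number of fixed vertices minus one, and generalizes --- on an arbitrary graph $\partial M = P\partial$ shows $M$ preserves the cycle space, and the HTC hypothesis forces $M$ to vanish there, so the same trace additivity re-derives Theorem 1 (in effect subsuming Lemmas 5 and 6) in one stroke; the paper's dot argument buys elementarity, requiring nothing beyond the definition of $M$, which is consistent with the authors' stated decision at the end of the introduction to acknowledge the Lefschetz connection but give elementary proofs throughout.
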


\begin{proof}
For each vertex, $v_i$ there is a reduced path from
$v_i$ to  $f(v_i)$. Put a dot on the first edge in this path. 

Observe that an edge $E_i$ contains two dots if and only
if $-E_i$ is in the reduced path that corresponds to $f(E_i)$. Also observe that $E_i$ contains no
dots if and only if $E_i$ is in the reduced path corresponding to $f(E_i)$. Finally, an
edge contains one dot if and only if the reduced path of $f(E_i)$ does not
contain either $E_i$ or $-E_i$. Notice that the number of dots on
the edge $E_i$ is exactly $1-M_{ii}$. If $e$ denotes the number of edges in $T$,  the total number of
dots is $\sum_1^e(1-M_{ii})=e-\tr(M)$. However, there are
exactly $v$ dots on $T$, so
$v=e-\tr(M)$, and $\tr(M)=e-v=-1$.
\end{proof}

\begin{lem}
	Given any graph $G$ and 
	any permutation $\theta$ that does not fix any vertices, there exists an HTC map from $G$ to $G$ which permutes the vertices of $G$ by $\theta$ and has an Oriented Markov Matrix with trace $-1$.
\end{lem}

\begin{proof}
	Let $S$ be a spanning tree of $G$ and $f:G \to G$ any map that permutes the vertices according to $\theta$ and whose image is $S$.
	We know from the previous lemma that a map from a tree to itself that does not fix any vertex will have an Oriented Markov Matrix with trace $-1$, so $\tr(\OMM(f|_S))=-1$. 	The remaining edges are not in the image, so, they do not map to themselves.
	Thus, no other edges will contribute to the trace of $\OMM(f)$, so $\tr(\OMM(f))=-1$, as desired.
\end{proof}


\begin{lem}
	Given any graph $G$ and any permutation $\theta$, the Oriented Markov Matrix of any two HTC maps from $G$ to $G$ that permute the vertices by $\theta$ will have the same trace.
\end{lem}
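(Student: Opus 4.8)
The plan is to fix two HTC maps $f$ and $g$ that both permute the vertices by $\theta$, to compare their Oriented Markov Matrices $M_f=\OMM(f)$ and $M_g=\OMM(g)$, and to show directly that the difference $D=M_g-M_f$ has zero trace; since $\tr(D)=\tr(M_g)-\tr(M_f)$, this gives the lemma. Writing $Z$ for the span of the linearly independent set $W$ of cycle vectors from Section 6 (equivalently, the space of vectors associated to closed paths in $G$), I would establish the two containments $\mathrm{im}(D)\subseteq Z$ and $Z\subseteq\ker(D)$, and then deduce $\tr(D)=0$ from them.

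For the first containment, recall that the $i$-th column of an Oriented Markov Matrix is exactly the vector associated to the path $f(E_i)$, since the entry $M_{ki}$ counts the net number of appearances of $E_k$ in $f(E_i)$. If the edge $E_i$ runs from $v_a$ to $v_b$, then because $f$ and $g$ realize the \emph{same} permutation $\theta$ on vertices, both $f(E_i)$ and $g(E_i)$ are paths from $\theta(v_a)$ to $\theta(v_b)$ and hence share endpoints. Therefore the concatenation of $g(E_i)$ with the reverse of $f(E_i)$ is a closed path, and its associated vector — which is precisely the $i$-th column of $D$ — lies in $Z$ by the standard homology fact quoted in Section 6 that every closed-path vector is a linear combination of the vectors in $W$. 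Thus every column of $D$, and hence all of $\mathrm{im}(D)$, lies in $Z$.

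For the second containment I would invoke the defining property of HTC maps recorded in Section 6: since cycles collapse to the empty path, $M_f\vec{w}=\vec{0}$ and $M_g\vec{w}=\vec{0}$ for every $\vec{w}\in W$, and so by linearity for every $\vec{w}\in Z$. Hence $D\vec{w}=\vec{0}$ on all of $Z$, that is, $Z\subseteq\ker(D)$. Combining the two containments gives $\mathrm{im}(D)\subseteq Z\subseteq\ker(D)$, so $D^2=0$. To read off the trace elementarily, choose a basis of $\mathbb{R}^n$ whose first $\dim Z$ vectors span $Z$: the first $\dim Z$ columns of $D$ vanish because $Z\subseteq\ker(D)$, and the remaining rows vanish because $\mathrm{im}(D)\subseteq Z$, so every diagonal entry of $D$ is $0$ in this basis and $\tr(D)=0$. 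This yields $\tr(M_f)=\tr(M_g)$.

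The one genuinely substantive step is the first containment: recognizing the columns of $M_g-M_f$ as closed-path vectors via the shared endpoints forced by $\theta$. Everything else is bookkeeping together with the HTC hypothesis (which, crucially, is used for \emph{both} maps in the second containment, while the coincidence of $\theta$ is used in the first). I would also remark that the same computation explains the result conceptually: the common value is the trace of $\theta$ acting on the space of boundaries of vertices, which depends only on $\theta$ and $G$, and this is the link to the Lefschetz number mentioned in the introduction.
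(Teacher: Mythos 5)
Your proof is correct and follows the paper's argument in all essentials: the paper likewise writes $M=N+B$ with each column of $B$ a closed-path vector (your containment $\mathrm{im}(D)\subseteq Z$, coming from the shared endpoints forced by $\theta$) and uses the HTC hypothesis on \emph{both} maps to conclude $B\vec w_m=\vec 0$ for all $m$ (your containment $Z\subseteq\ker(D)$). The only divergence is the final step: the paper extracts $\tr(B)=0$ by an explicit double-sum computation with the coefficients $a_{ji}$, invoking the linear independence of $W$, whereas you note $\mathrm{im}(D)\subseteq Z\subseteq\ker(D)$ forces $D^2=0$ and read off $\tr(D)=0$ in a basis adapted to $Z$ --- a cleaner packaging of exactly the same two facts.
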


\begin{proof}
Suppose the graph $G$ has $n$ edges, and $c$ linearly independent cycles. Let $W=\{\vec w_1, \vec w_2, \cdots, \vec w_c\}$ denote the set of linearly independent cycles on $G$.

%

Let $f$ and $g$ be two maps that are HTC and that permute the vertices by the same permutation $\theta$. For each edge $E_i$, the reduced paths corresponding to $f(E_i)$ and $g(E_i)$ have the same initial and terminal points. This means that if we let $\vec v_1$ and $\vec v_2$ demote the corresponding vectors, then  $\vec v_1 - \vec v_2$ will be a closed path (a homological $1$-cycle) and thus can be written as a linear combination of vectors from $W$.

 Denote the Oriented Markov Matrices of $f$ and $g$ by $M$ and $N$. Then   $M$ and $N$ can be related by $M=N+B$, where $B$ is an $n\times n$ matrix such that each column is an integer linear combination of the vectors in $W$. Therefore, $B=\bigg{[}\sum_{i=1}^{c} a_{1i}\vec w_i |\sum_{i=1}^{c} a_{2i}\vec w_i | \cdots |\sum_{i=1}^{c} a_{ni}\vec w_i\bigg{]}$. 
So, 
\begin{align*}
 \tr(B) &= \sum_{i=1}^{c} a_{1i}w_{1i} + \sum_{i=1}^{c} a_{2i}w_{2i} + \cdots + \sum_{i=1}^{c} a_{ni}w_{ni}\\
 &=\sum_{j=1}^{n}\sum_{i=1}^{c} a_{ji}w_{ji} \\
 &=\sum_{i=1}^{c}\sum_{j=1}^{n} a_{ji}w_{ji}\ .
\end {align*} 

Since $f$ and $g$ are mappings on $G$ that are homotopic to the constant map, the image of cycles must collapse. So for $m$ satisfying $1\leq m \leq c$ we know $N\vec w_m=\vec 0$ and $\vec 0 =M\vec w_m=(N+B)\vec w_m=N\vec w_m + B\vec w_m= \vec 0+B\vec w_m=\vec0$. Therefore $B\vec w_m=\vec0$. Thus we obtain 

\begin{align*}
\vec 0 &= B\vec w_m\\ 
&= w_{1m}\sum_{i=1}^{c} a_{1i}\vec w_i+ w_{2m}\sum_{i=1}^{c} a_{2i}\vec w_i+ \cdots + w_{nm}\sum_{i=1}^{c} a_{ni}\vec w_i\\
&=\sum_{j=1}^n\sum_{i=1}^{c} w_{jm}a_{ji}\vec w_i\\
&=\sum_{i=1}^{c}\sum_{j=1}^n w_{jm}a_{ji}\vec w_i\\
&=\Bigg(\sum_{j=1}^n w_{jm}a_{j1}\Bigg)\vec w_1+\Bigg(\sum_{j=1}^n w_{jm}a_{j2}\Bigg)\vec w_2+\cdots+\Bigg(\sum_{j=1}^n w_{jm}a_{jc}\Bigg)\vec w_c \ .
\end{align*}

Recall that the vectors in $W$ are linearly independent. Since we have a linear combination of linearly independent vectors that is equal to the zero vector, all coefficients must be equal to zero. More specifically, the $m^{th}$ coefficient is equal to zero. So for $m$ satisfying $1\leq m \leq c$  we know $\sum_{j=1}^n w_{jm}a_{jm}=0$. So it follows that $\sum_{i=1}^{c}\sum_{j=1}^{n} a_{ji}w_{ji}=0$. This is the trace of $B$. Therefore $\tr(B)=0$. 

It follows that $\tr(M)=\tr(N)+\tr(B)=\tr(N)+0=\tr(N)$, so the trace of the Oriented Markov Matrices of two maps HTC on a graph $G$ with a given permutation will be the same.
\end{proof}


\begin{thm}
	Given any graph $G$ and any permutation $\theta$ that does not fix any vertices, the Oriented Markov Matrix of any HTC map from $G$ to $G$ with permutation $\theta$ will have a trace of $-1$.
\end{thm}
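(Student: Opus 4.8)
The plan is to read this statement as an immediate corollary of the two lemmas just established, so the proof is a one-step synthesis rather than a fresh argument. The previous existence lemma guarantees that for the given graph $G$ and the fixed-point-free permutation $\theta$ there is \emph{at least one} HTC map $g\colon G\to G$ realizing $\theta$ whose Oriented Markov Matrix has trace $-1$. The previous invariance lemma guarantees that \emph{any two} HTC maps realizing the same permutation have Oriented Markov Matrices with the same trace. Chaining these two facts together immediately forces the trace of every such map to equal $-1$.

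Concretely, I would argue as follows. Let $f\colon G\to G$ be an arbitrary HTC map permuting the vertices by $\theta$, and let $g$ be the specific map produced by the existence lemma, so that $\tr(\OMM(g))=-1$. Both $f$ and $g$ are HTC maps on $G$ inducing the same permutation $\theta$, so the hypotheses of the invariance lemma are met; it yields $\tr(\OMM(f))=\tr(\OMM(g))$. Combining, $\tr(\OMM(f))=-1$. Since $f$ was an arbitrary HTC map realizing $\theta$, this is exactly the claim.

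There is essentially no obstacle here, because all the genuine difficulty has been absorbed into the two preceding lemmas: the existence lemma does the combinatorial construction (mapping onto a spanning tree and invoking the tree-trace computation), and the invariance lemma does the homological bookkeeping (writing $M=N+B$ with columns of $B$ in the span of the cycle vectors $W$, then using $B\vec w_m=\vec 0$ and linear independence to force $\tr(B)=0$). The only point worth checking is that the hypothesis ``$\theta$ fixes no vertices'' is needed for the existence lemma — the invariance lemma holds for any $\theta$ — so I would make sure to invoke the existence lemma explicitly to produce the value $-1$, and reserve the invariance lemma purely for transferring that value to an arbitrary $f$. Thus the proof is complete in two sentences once both lemmas are cited.
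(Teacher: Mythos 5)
Your proposal is correct and is essentially identical to the paper's own proof: the paper likewise cites the existence lemma (Lemma 5) to produce one HTC map realizing $\theta$ with trace $-1$, and the invariance lemma (Lemma 6) to transfer that value to an arbitrary HTC map with the same permutation. Your added remark about where the fixed-point-free hypothesis is actually used is a nice clarification, but the argument itself matches the paper's two-sentence synthesis exactly.
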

\begin{proof}
By Lemma 5, for a given permutation, there exists a map such that the trace of its Oriented Markov Matrix is $-1$. By Lemma 6, any two maps on a graph that have the same permutation will have the same trace. Therefore, for any graph $G$ and any permutation $\theta$ that does not fix any vertices, the Oriented Markov Matrix of any map from $G$ to $G$ will have trace $-1$.
\end{proof}


\begin{thm}
\label{thm:OMMpowers}
If $M$ and $N$ are the Oriented Markov Matrices of two maps from the graph $G$ to itself which are HTC and have the same vertex permutation, then for any positive integer $r$
\[
	M^r=MN^{r-1}.
\]
\end{thm}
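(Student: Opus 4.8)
The plan is to leverage the decomposition of $M$ and $N$ that already appeared in the proof of Lemma 6. There it was shown that, because the two maps induce the same vertex permutation, one may write $M = N + B$, where $B$ is an $n \times n$ matrix each of whose columns is an integer linear combination of the cycle vectors $\vec w_1, \dots, \vec w_c$ in $W$. I would take this identity as the starting point, so that the theorem is reduced to understanding how $M$ interacts with $B$.

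The one substantive observation is that $M$ annihilates every column of $B$. Indeed, since the map giving rise to $M$ is HTC, each cycle collapses, so $M\vec w_i = \vec 0$ for every $i$ (this is exactly the relation recorded in Section 6). Consequently $M$ sends any vector in the span of $W$ to $\vec 0$; in particular each column of $B$, being a linear combination of the $\vec w_i$, is sent to $\vec 0$. Hence $MB = 0$, the zero matrix. Multiplying out the decomposition then gives $M^2 = M(N+B) = MN + MB = MN$, which establishes the case $r = 2$ and is the crux of the argument.

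From here I would finish by induction on $r$. The base case $r = 1$ is just $M = MN^{0} = M$. Assuming $M^r = MN^{r-1}$, I would compute
\[
	M^{r+1} = M \cdot M^r = M \cdot (MN^{r-1}) = M^2 N^{r-1} = (MN)N^{r-1} = MN^{r},
\]
using $M^2 = MN$ at the penultimate step. This closes the induction and yields $M^r = MN^{r-1}$ for all positive integers $r$.

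The only point requiring care is the noncommutativity of matrix multiplication: since $M$ and $N$ need not commute, I must apply the collapsing relation $M^2 = MN$ on the correct (left) side at each stage, and I cannot freely substitute $N$ for $M$ wherever convenient. Beyond that bookkeeping I expect no real obstacle, as everything follows immediately once $MB = 0$ is in hand. It is worth noting that the asymmetry of the conclusion — $M^r = MN^{r-1}$ rather than, say, $N^r$ — reflects precisely that only the leftmost factor must retain the full information of $M$, while each subsequent factor may be replaced by $N$ because the discrepancy $B$ lies in the kernel-producing span of $W$.
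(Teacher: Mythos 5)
Your proof is correct and follows essentially the same route as the paper's: both use the decomposition $M = N + B$ with columns of $B$ in the span of $W$, observe that $MB = 0$ since $M\vec{w} = \vec{0}$ for $\vec{w} \in W$, deduce $M^2 = MN$, and finish by induction. The only difference is that you write out the induction step explicitly (correctly keeping $M^2 = MN$ applied on the left), which the paper leaves implicit.
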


\begin{proof}
Let $M$ and $N$ be the Oriented Markov Matrices described above.
Under these two maps, the image of each edge can only differ by an integer number of times edges map around complete cycles.
Thus, we know that $M=N+B$, where the columns of $B$ are integer linear combinations of the vectors in $W$.
Notice, then, that
\[
	M^2=M(N+B)=MN+MB.
\]
It is clear that $MB=0$ because the columns of $B$ are linear combinations of vectors in $W$ and $M\vec{w}=\vec{0}$ if $\vec{w} \in W$.

So $M^2=MN$. Induction then gives $M^r=MN^{r-1}$.

\end{proof}

\begin{thm}
\label{thm:eqOMMs}
	
	Given an HTC map $f:G \to G$  with a permutation $\theta$ and Oriented Markov Matrix, $M$,  if $\theta^p$ is the identity permutation, then  $M^{p+1}=M$.
\end{thm}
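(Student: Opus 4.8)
The plan is to exploit the hypothesis $\theta^p = \mathrm{id}$ by comparing $f^p$ with the identity map of $G$. Since $\theta^p$ fixes every vertex, $f^p$ and $\mathrm{id}_G$ permute the vertices by the same permutation (the identity). I would use Lemma 3 to identify $M^p$ with $\OMM(f^p)$, and note that $\OMM(\mathrm{id}_G) = I$, since each edge maps to itself with positive orientation. The goal then becomes to show that $M^p$ and $I$ are related in the same controlled way as any two Oriented Markov Matrices of maps sharing a vertex permutation, and to cancel the discrepancy against $M$.

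Concretely, I would first argue that $M^p = I + B$, where each column of $B$ lies in the span of the cycle vectors $W$. This is precisely the observation that opens the proof of Lemma 6: for each edge $E_j$ with endpoints $v_a, v_b$, the reduced path $f^p(E_j)$ runs from $f^p(v_a)=v_a$ to $f^p(v_b)=v_b$, the same endpoints as $E_j$ itself, so $f^p(E_j)$ followed by the reverse of $E_j$ is a closed path. Hence the $j$-th column of $M^p$, which is the vector associated to $f^p(E_j)$, differs from $e_j$, the vector associated to $E_j$, by the vector of a closed path, and by the homological fact recalled in Section 6 that vector lies in $\mathrm{span}(W)$. I would stress that this step uses only the common permutation and not the HTC property, which is what allows the comparison with $\mathrm{id}_G$ even though the identity map is not HTC. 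Having written $M^p = I + B$, the proof finishes exactly as in Theorem \ref{thm:OMMpowers}: since $f$ is HTC, $M\vec w = \vec 0$ for every $\vec w \in W$, so $MB = 0$, and therefore
\[
  M^{p+1} = M \cdot M^p = M(I + B) = M + MB = M.
\]

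The step I expect to demand the most care is establishing $M^p = I + B$ with the columns of $B$ in $\mathrm{span}(W)$. It rests on two facts that must be invoked cleanly: that the $j$-th column of $\OMM(g)$ is the vector associated to the path $g(E_j)$, so that $M^p e_j$ is the vector of $f^p(E_j)$ (via Lemma 3 together with the remark in Section 6 that $M^p \vec u$ corresponds to the image of the path $\vec u$ under $f^p$); and the homological statement that the vector of any closed path is an integer combination of the vectors in $W$. Once the vertex-fixing property of $f^p$ is used to guarantee that $f^p(E_j)$ and $E_j$ close up into a loop, the remainder is the same short cancellation already used for Theorem \ref{thm:OMMpowers}, so no further obstacle should arise.
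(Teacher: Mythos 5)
Your proof is correct, and it takes a genuinely different and shorter route than the paper's. The paper applies Theorem \ref{thm:OMMpowers} with an auxiliary HTC map $f_S$ whose image is a spanning tree, writing $M^{p+1}=MT^p$, and then analyzes $T^p$ column by column: for a tree edge $E_i$, uniqueness of reduced paths in the tree together with the fact that $\theta^p$ fixes all vertices forces $f_S^p(E_i)$ to reduce to $E_i$, so the columns of $M^{p+1}$ indexed by tree edges agree with those of $M$; the columns indexed by the remaining edges are then handled by showing that in both $M$ and $M^{p+1}$ they are the same linear combination of the tree columns, via $M\vec w_z=\vec 0$. You bypass the auxiliary map entirely by comparing $f^p$ with the identity: since $\theta^p$ fixes every vertex, $f^p(E_j)$ followed by $-E_j$ is a closed path, so $M^p=I+B$ with each column of $B$ an integer combination of the vectors in $W$, whence
\[
M^{p+1}=M\,M^{p}=M(I+B)=M+MB=M,
\]
since $MB=0$ by the HTC hypothesis on $f$. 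Your key observation --- that the decomposition step in the proofs of Lemma 6 and Theorem \ref{thm:OMMpowers} uses only the shared vertex permutation, so that comparison with the non-HTC map $\mathrm{id}_G$ is legitimate, with HTC entering only through $M\vec w=\vec 0$ --- is exactly right, and you correctly flag it rather than citing the statement of Lemma 6, which requires both maps to be HTC. The one step to state carefully is the identification of the $j$-th column of $M^p$ with the vector of the path $f^p(E_j)$; this follows from Lemma 3 together with the definition of the Oriented Markov Matrix, since contracting a path does not change its associated vector (and no degenerate case arises: the endpoints of $E_j$ are distinct, so $f^p(E_j)$ cannot reduce to the empty path). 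What the paper's argument buys is that it reuses Theorem \ref{thm:OMMpowers} verbatim and never leaves the class of HTC maps; what yours buys is brevity and a cleaner accounting of exactly where each hypothesis is used.
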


\begin{proof}
	Let $M$ and $f$ be as described above.
	Recall, $M^{p+1}$ is the Oriented Markov Matrix associated with $f^{p+1}$.
	Let $T$ be the Oriented Markov Matrix associated with an HTC map $f_S$, with the same permutation as $f$ on $G$, whose image is a spanning tree.
	Recall from Theorem \ref{thm:OMMpowers} above that $M^{p+1}=M T^p$.
	
	Since the image of $G$ under $f_S$ is a spanning tree of $G$, the image of $G$ under $(f_S)^r$ will be the same spanning tree for any given $r$.
	In particular, the image of $G$ under $(f_S)^p$ is this spanning tree.
	Since there are no cycles in a tree, there is only one reduced path from $v_a$ to $v_b$.
	We know that $\theta^p$ fixes all vertices.
	Therefore, if $E_i$ is an edge in the spanning tree, $f_S^p(E_i)$ can be reduced to $E_i$. This means that the $i^{th}$ column of $T^p$ has $1$ in the $i^{th}$ entry and zeros for the other entries.
	
		Since the labeling of edges is arbitrary, let us label the edges in the spanning tree $E_1, E_2, \ldots E_{v-1}$ and the edges that are not a part of the 						spanning tree as $E_v, E_{v+1},\ldots, E_n$. 
	Let $E_a$ be an edge in the spanning tree. The $a^{th}$ column of $T^p$ will have an entry of $1$ in the $a^{th}$ component and entries of $0$ everywhere else.
	So the first $v-1$ columns of the matrix $M T^p= M^{p+1}$ are identical to the first $v-1$ columns of the matrix $M$.

	Given any edge $E_z$ that is not in the spanning tree, we know that there is a cycle in the graph $G$ that contains $E_z$ and such that every other edge belongs to the spanning tree. Let $\vec w_z$ be the vector that corresponds to this cycle. Since $f$ is HTC, $M\vec w_z=\vec 0$. This means that the column of $M$ corresponding to $E_z$ is a linear combination of the first $v-1$ columns. But note that the same argument shows that the column of $M^{p+1}$ corresponding to $E_z$ is exactly the same linear combination of the first $v-1$ columns of $M^{p+1}$. Thus the columns corresponding to $E_z$ in  $M$ and $M^{p+1}$ are equal. So 
$M^{p+1}=M$.

%
\end{proof}


\section{Periods of periodic orbits}

In this section we use the trace results from the previous section to prove our main results.

\begin{thm}
Suppose that $f:G \to G$ is HTC and permutes the $v$ vertices of $G$ with permutation $\theta$, where $\theta$ consists of one cycle. If $v$ is not a divisor of $2^k$, then $f$ has a periodic point of period $2^k$.
\end{thm}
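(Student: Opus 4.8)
The plan is to read off a periodic point directly from the trace of $M^{2^k}$, where $M = \OMM(f)$. The key elementary observation is that because $\theta$ is a single $v$-cycle, its power $\theta^{2^k}$ either fixes every vertex or fixes none: writing $\theta=(1,2,\dots,v)$, the iterate $\theta^{2^k}$ sends $i\mapsto i+2^k \pmod v$, which fixes a vertex precisely when $v \mid 2^k$. Hence the hypothesis that $v$ is \emph{not} a divisor of $2^k$ is exactly what guarantees that $\theta^{2^k}$ fixes no vertex of $G$.

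With that in hand I would pass to the iterate $f^{2^k}$. Since $f$ is homotopic to a constant map, so is $f^{2^k}=f\circ\cdots\circ f$, because precomposing (or postcomposing) a nullhomotopic map with any continuous map remains nullhomotopic; thus $f^{2^k}$ is again an HTC map, and it permutes the vertices by $\theta^{2^k}$. By Lemma 3 its Oriented Markov Matrix is $M^{2^k}$. As $\theta^{2^k}$ fixes no vertex, Theorem 1 (the trace theorem) applies to $f^{2^k}$ and gives $\tr\!\left(M^{2^k}\right)=-1$.

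To finish, note that since the diagonal entries of $M^{2^k}$ sum to $-1$, at least one diagonal entry $\left(M^{2^k}\right)_{ii}$ is nonzero. By Lemma 2 this entry is the number of positively oriented length-$2^k$ walks from $E_i$ to $E_i$ minus the number of negatively oriented ones, so a nonzero value forces at least one closed walk of length $2^k$ from $E_i$ to itself to exist. The inductive fixed-point construction of Section \ref{sec:PeriodicPoints} (built on Lemma 1) then produces a point $x\in E_i$ with $f^{2^k}(x)=x$, i.e.\ a periodic point of period $2^k$, which is what the statement asserts.

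I do not expect a serious obstacle, since the whole argument is a direct assembly of earlier results; the only points needing care are the two bookkeeping facts invoked above, namely that $f^{2^k}$ is genuinely HTC (so that Theorem 1 may be applied to it) and that its matrix is exactly $M^{2^k}$ (Lemma 3), together with the vertex-fixing dichotomy for a single $v$-cycle. It is also worth flagging that this argument yields only that $2^k$ is \emph{a} period of $x$ — its minimum period may be a proper divisor $2^{j}$ — which is precisely the (weaker, non-minimal) conclusion the statement requires, in contrast to the minimum-period claim of the paper's second main theorem.
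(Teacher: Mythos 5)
Your proposal is valid for the statement as literally phrased, and its first half coincides exactly with the paper's proof: $\theta^{2^k}$ fixes no vertex because $v \nmid 2^k$, the iterate $f^{2^k}$ is HTC with $\OMM(f^{2^k}) = M^{2^k}$ by Lemma 3, Theorem 1 gives $\tr\left(M^{2^k}\right) = -1$, and a nonzero diagonal entry yields a closed walk of length $2^k$ and hence a point $x$ with $f^{2^k}(x)=x$. The substantive divergence is that you then discard the \emph{sign} of the trace, and the sign is the real content of the paper's endgame. Since the trace is $-1$, some edge $E_{i_0}$ carries a closed walk of length $2^k$ with \emph{negative} orientation, and the paper exploits this twice: first, such a walk cannot be a repetition of a shorter closed walk, because any repetition count divides $2^k$ and is therefore even, which would force positive orientation; second, the subinterval $J \subseteq E_{i_0}$ is mapped onto $E_{i_0}$ by $f^{2^k}$ with reversed orientation, which forces the fixed point $z$ to be an \emph{interior} point of $E_{i_0}$, so that $f^j(z)$ stays in the interior of $E_{i_j}$ for all $j$, the orbit of $z$ tracks the walk edge by edge, and $z$ has \emph{minimum} period exactly $2^k$.

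You flag honestly that your argument yields only that $2^k$ is \emph{a} period of $x$, and you treat that as sufficient; but that weak reading makes the theorem nearly vacuous, which is a sign the minimal version is intended. Indeed, $v \nmid 2^k$ forces $v>1$, so the $k=0$ instance of your own argument already produces a fixed point $x_0$ of $f$ (from $\tr(M)=-1$), and this single point satisfies $f^{2^k}(x_0)=x_0$ for \emph{every} $k$, witnessing your weak conclusion for all $k$ simultaneously with no further work. Moreover the paper genuinely needs the strong form: the proof of Theorem 6 derives periodic points of \emph{minimum} period $2^l$ ``immediately from Theorem 4,'' which only the minimum-period version supports. So the missing idea is concrete — the orientation/parity argument (negative walk of power-of-two length is not a repetition; negative covering forces an interior fixed point) — and restoring it upgrades your proof to the paper's; without it, the theorem loses the content the rest of the paper relies on.
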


\begin{proof}
Since $v$ is not a divisor of $2^k$, we know that $\theta^{2^k}$ does not fix any of the vertices.
So $M^{2^k}$ has a trace of $-1$.
Ergo there is at least one edge $E_{i_0}$ with a closed walk of length $2^k$ with negative orientation.
Since the orientation is negative, it cannot be a repetition of a shorter closed walk, as any shorter closed walk would have to be repeated an even number of times. Let $E_{i_0}E_{i_1}\cdots E_{i_{2^k}}$ denote the closed walk, with $E_{i_0}=E_{i_{2^k}}$.

We know that there is a closed subinterval $J \subseteq E_{i_0}$ that gets mapped onto $E_{i_0}$ by $f^{2^k}$. 
As pointed out before, the endpoints of $E_{i_0}$ might belong to other intervals, and we have to be careful that the closed walk is not describing one of the endpoints of $E_{i_0}$. 
However, since $J$ gets mapped onto $E_{i_0}$ with negative orientation, the point that is fixed by $f^{2^k}$ must be an interior point. Let $z$ denote this point. Since $z$ is fixed by $f^{2^k}$ and is an interior point of $E_{i_0}$, it must be the case that $f^j(z)$ is in the interior of $E_{i_j}$ for $0 \leq j \leq 2^k$. Since the walk is not the repetition of a shorter walk it must be the case that $z$ has minimum period of $2^k$.

\end{proof}

\vspace{2pc}

We say a closed walk from $E$ to itself is \emph{prime}, if it is not the concatenation of shorter walks from $E$ to itself. 

\begin{thm}
Suppose that $f:G \to G$ is HTC and permutes the $v$ vertices of $G$ with permutation $\theta$, where $\theta$ consists of one cycle. Suppose that $v=2^ks$, where  $s>1$ is odd, and $k\ge0$. Then for any $r>s$ there exists a periodic point of minimum period $2^k r$.

\end{thm}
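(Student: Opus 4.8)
The plan is to translate the existence of a minimum period $2^k r$ point into the existence of a \emph{primitive} closed walk of length $2^k r$ in the Oriented Markov Graph, where I call a closed walk primitive if it is not a proper power $u^j$ ($j\ge 2$) of a shorter closed walk from its base edge to itself. As in the proof of the previous theorem, a negatively oriented closed walk forces an interior fixed point, and primitivity forces the minimum period of that point to equal the length of the walk; moreover, when $s\nmid r$ the vertices (which all have period $v$) are not fixed by $f^{2^k r}$, so a primitive walk of \emph{either} orientation already produces a genuine minimum period $2^k r$ point. Thus the whole problem becomes: produce a primitive closed walk of length $2^k r$, taken negatively oriented in the cases where $s\mid r$.

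First I would pin down the relevant traces. Since $\theta$ is a single $v$-cycle, $\theta^m$ fixes a vertex exactly when $v\mid m$, so Theorem 1 gives $\tr(M^m)=-1$ whenever $v\nmid m$. When $v\mid m$, the relation $M^{v+1}=M$ (from $\theta^v=\mathrm{id}$) shows $M^m=M^v$, and the Lefschetz-type identity $\tr(M^m)=\#\{\text{vertices fixed by }\theta^m\}-1$ (valid because $f$ is null-homotopic and so acts as $0$ on the cycle space) gives $\tr(M^v)=v-1$; the same value can be obtained elementarily from the spanning-tree matrix $T$ via $M^v=MT^{v-1}$. Hence $\tr(M^{2^k q})=-1$ if $s\nmid q$ and $=v-1$ if $s\mid q$. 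Next I would count primitive walks by sign: writing $p(\ell)$ and $\nu(\ell)$ for the numbers of positively and negatively oriented primitive closed walks of length $\ell$, each closed walk is a power $u^j$ of a unique primitive $u$ with sign $\sigma(u)^j$, so $\tr(M^\ell)=\sum_{d\mid\ell}\big(p(d)+(-1)^{\ell/d}\nu(d)\big)$. Because a negative walk can only arise as a power with an \emph{odd} exponent, a M\"obius inversion over the odd part of the length collapses this, for each odd $o$ and each $a\ge k+1$, to the clean recursion
\[
	\nu(2^a o)=p(2^a o)+2\,\nu(2^{a-1}o).
\]
This is the engine of the argument: once a negative primitive walk exists at one $2$-adic level $2^{a_0}o$ it persists for all higher levels, and any even $r>2s$ reduces to the smaller value $r/2>s$ because $\nu(2^k r)\ge 2\,\nu(2^k(r/2))$. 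The theorem is therefore reduced to the base cases in which $r$ is odd, together with the finitely many even $r$ with $s<r\le 2s$.

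I expect these base cases to be the main obstacle. For odd $r>s$ one must exhibit an actual primitive closed walk of length exactly $2^k r$ (negatively oriented when $s\mid r$), and this cannot be extracted from the traces alone: the signed traces are determined by the eigenvalues of $M$ (which are $0$ together with the $v$th roots of unity), whereas the existence of a primitive walk of a prescribed length is an unsigned count that the spectrum does not pin down. The resolution must use the combinatorial structure coming from the single period-$v$ orbit of vertices. Concretely, I would build a negatively oriented \emph{prime} loop and concatenate it with suitable positively oriented loops sharing a common edge -- the graph analogue of a \v{S}tefan cycle in the interval and circle theorems -- so as to hit the exact length $2^k r$ while keeping the concatenated walk primitive. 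It is precisely in controlling which lengths are attainable this way that the threshold $s$ enters, and carrying this out uniformly for every odd $r>s$ is the delicate heart of the proof.
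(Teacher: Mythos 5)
Your proposal has a genuine gap: it is a reduction, not a proof. The parts you carry out are correct --- writing $t(\ell)=\tr(M^\ell)$, the values $t(\ell)=-1$ for $v\nmid\ell$ and $t(\ell)=v-1$ for $v\mid\ell$, the decomposition $t(\ell)=\sum_{d\mid\ell}\bigl(p(d)+(-1)^{\ell/d}\nu(d)\bigr)$ over primitive walks, and the recursion $\nu(2^a o)=p(2^a o)+2\,\nu(2^{a-1}o)$ for $a\ge k+1$ (the M\"obius-inverted correction term $\sum_{e\mid o}\mu(o/e)\bigl(t(2^a e)-t(2^{a-1}e)\bigr)$ does vanish once $a-1\ge k$). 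But this machinery only transfers positivity of $\nu$ upward through $2$-adic levels; it produces no primitive walk at any level to start from. The cases you defer --- all odd $r>s$, plus the even $r$ with $s<r\le 2s$ --- are the entire content of the theorem, and your closing paragraph about concatenating loops in a \v{S}tefan-cycle-like configuration is a statement of intent, not an argument: you give no construction of the initial negative prime loop, no control of its length, and no mechanism forcing primitivity of the concatenation. You are also slightly off in diagnosing the missing input: it is not the detailed combinatorics of the vertex orbit.

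What the paper actually uses, and what your proposal lacks, are two ideas. First, Theorem 3 is exploited as a \emph{matrix} identity, not a trace identity: $M^{2^k(s+1)}=M^{2^k}$ forces the two powers to have identical diagonals, so a single edge $E$ carries a negatively oriented closed walk of length $2^k$ and a negatively oriented closed walk of length $2^k(s+1)$; parity shows neither is a repetition of a shorter walk nor the second a power of the first. Concatenating $r-s-1$ copies of the short walk with one copy of the long one gives a closed walk of length $2^k r$ based at $E$ for \emph{every} $r>s$ simultaneously, with no parity case split on $r$. Second --- the key missing idea --- primitivity is manufactured by rearrangement rather than extracted from counts: every closed walk at $E$ factors into prime closed walks at $E$; if the length-$2^k r$ walk is repetitive then each of its primes occurs at least twice, and sorting them into blocks $P_1^{a_1}P_2^{a_2}\cdots P_i^{a_i}$ with $i\ge 2$ distinct primes yields a non-repetitive walk of the same length (the one exceptional case $r=s+1$, where the walk could be a single prime repeated an odd number of times, is patched by prepending that prime once and appending copies of the $2^k$-walk). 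Consequently no \v{S}tefan-type construction is needed, and the paper does not even need the final walk to be negatively oriented: since the rearranged walk begins with two consecutive copies of $P_1$, of length $l\le 2^k$, an endpoint vertex $v_a$ of $E$ would satisfy $f^{2l}(v_a)=v_a$ with $2l\le 2^{k+1}<2^k s$, contradicting the vertex period $2^k s$; hence the fixed point of $f^{2^k r}$ is interior and, the walk being non-repetitive, has minimum period exactly $2^k r$.
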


\begin{proof}
Consider $f$ as described above, and let $\OMM(f) = M$. Since $f$ permutes the vertices by $\theta$, the map $f^{2^k}$ permutes the vertices by $\theta^{2^k}$, and $\OMM(f^{2^k}) = M^{2^k}$.
Thus, the vertices of $G$ all have minimum period $s$ under $f^{2^k}$, so none of them are fixed. 

By Theorem 3, $(M^{2^k})^{s+1}=M^{2^k s +2^k}=M^{2^k}$. We see by Theorem 1 that $\tr(M^{2^k})= \tr((M^{2^k})^{s+1})=-1$. 
So there is a closed walk of length $2^k$ from an edge to itself with negative orientation, and a closed walk of length $2^k (s+1)$ from that same edge to itself with negative orientation. Since $2^k$ is a power of 2, any repeated walk would have to be repeated an even number of times and therefore have positive orientation. Thus, the $2^k$-length walk is not a repetition of a smaller walk.
Since $s$ is odd, $s+1$ is even, so repeating the closed $2^k$-length walk $s+1$ times would have positive orientation.
Therefore, the walk of length $2^k (s+1)$ is not a repetition of the walk of length $2^k$.
So for all $r>s$ we can produce a closed walk of length $2^k r$ by repeating the $2^k$-length walk $r-s-1$ times and the walk of length $2^k(s+1)$ once. 

Although we have found a periodic point of period $2^k r$, this walk may be repetitive, and so we have not shown the existence of a point with minimum period $2^k r$.

If this closed walk of length $2^k r$  is repetitive, then we will  construct a non-repetitive closed walk of length $2^k r$ by rearranging the prime closed walks that comprise the repetitive walk. In what follows we will fix an $E$ that appears in the closed walk and consider prime closed walks to this particular edge.

Notice that if a closed walk is not prime it must be the concatenation of at least two possibly distinct prime closed walks. We will show that the walk of length $2^k r$ contains at least two prime walks from $E$ to itself. The one exception is if $r=s+1$ and we deal with that first.

First we consider the case when $r=s+1$. If the walk of length $2^k r$ is repetitive and does not contain two distinct prime walks, it must consist of a prime walk repeated an odd number of times. Let the length of this prime walk be $2^k t$. We can obtain a new walk of length $2^k r$ by first using this prime walk of length $2^k t$ and the repeating the walk of length $2^k$ $r-t$ times. This new walk is non-repetitive and has negative orientation, and so there must a periodic point with minimum period $2^k r$. We now consider the case when $r>s+1$.

If the closed walk of length $2^k$ is prime, then there must exist at least one other prime closed walk in the walk of length $2^k (s+1)$, since the latter is not a repetition of the former.
If the closed walk of length $2^k$ is not prime, then because it is not repetitive there are at least two distinct prime closed walks in the closed walk of length $2^k$.
In either case there are at least two distinct prime closed walks in the closed walk of length $2^k r$.

Since we are assuming the closed walk of length $2^k r$ is repetitive, each prime closed walk must exist in that walk at least twice. We will let $P_1$ denote a prime walk of shortest length that appears in the walk of length $2^k$.
Suppose the closed walk of length $2^k r$ above has prime closed walks $P_1,P_2,\ldots,P_i$. Suppose for each $j$ that  $P_j$ is in the closed walk of length $2^k r$ a total of $a_j$ times, where $a_j\geq 2$.
Since all prime closed walks must begin and end at $E$, we may arrange them in any order and still have a valid walk.
So we may arrange them so that $P_1$ is repeated $a_1$ times followed by $P_2$ repeated $a_2$ times, etc.
It is clear that this closed walk cannot be repetitive.
Ergo, we can create a non-repetitive closed walk of length $2^k r$ by rearranging the prime closed walks from the repetitive closed walk of length $2^k r$ created above. 

So there exists a non-repetitive closed walk of length $2^k r$.
We know that this walk implies the existence of a point with period $2^k r$, but because the vertices may appear in multiple intervals, it is still conceivable that this point is a vertex and consequently might have minimum period less than $2^k r$. Let $z \in E$ denote the periodic point with period $2^k r$. To complete the proof we must show that $z$ cannot be a vertex.

Since $r>s$ and $s \ge 3$, our construction starts by repeating the length-$2^k$ walk at least twice. This means that $P_1$ must appear at least twice. Let the length of $P_1$ be denoted by $l$. So our non-repetitive walk of length $2^k r$ has $E$ in the $1$, $l+1$ and $2l+1$ positions. Since $P_1$ is in the walk of length $2^k$ we know that $l \leq 2^k$. 

Let $v_a$ and $v_b$ denote the vertices that are endpoints of $E$. Suppose that $z=v_a$. Since the vertices cannot be mapped into the interior of $E$ it must be the case that $f^{l}(v_a)$ is either $v_a$ or $v_b$. We know that the period of $v_a$ is $2^k s$ and $2^k s > l$, so 
$f^{l}(v_a)$ must be $v_b$. Similarly, we know that $f^{2l}(v_a)$ is either $v_a$ or $v_b$. Since $v_b$ has period greater than $l$, it must be the case that $f^{2l}(v_a)$ is $v_a$. This implies that $v_a$ must have minimum period that is less than or equal to $2l$. But this gives a contradiction as we know that the minimum period of  $v_a$ is $2^k s$ and that $2l \leq 2^k 2< 2^k s$.

A similar argument shows that $z$ cannot be $v_b$. So $z$ must have minimum period $2^k r$.

\end{proof}

\section{Concluding remarks}

In this section we will compare our results to Sharkovsky's ordering and to the tree ordering in \cite{B3}.

The Sharkovsky ordering can be defined as follows: 
\begin{enumerate}
\item $2^l \triangleleft 2^k$ if $k \geq l$.
\item If $v=2^ks$, where $s>1$ is odd, then

\begin{enumerate}
\item $2^l \triangleleft v$, for all positive integers $l$.
\item $2^kr \triangleleft v$, where $r\geq s$ and $r$ is odd.
\item $2^lr \triangleleft v$, where $l>k$ and $r>1$ is odd.

\end{enumerate}
\end{enumerate}

To compare the various orderings we will state the tree ordering and the result of this paper using similar terms. First we re-state the results of this paper.

\begin{thm}
Suppose that $G$ is a graph with $v$ vertices and $f:G \to G$ is a map that is HTC and such that  the vertices of $G$ form one periodic orbit.
Then 
\begin{enumerate}
\item If $v=2^k$, then there must be periodic points of minimum period $2^l$ for any $l \leq k$.
\item If $v=2^ks$, where $s>1$ is odd, then

\begin{enumerate}
\item there are periodic points with minimum period $2^l$ for all positive integers $l$,
\item there are periodic points with minimum period $2^kr$ for any $r\geq s$ and $r$ is odd.
\item there are periodic points with minimum period $2^lr$ for all $l$ and $r$ satisfying: $l>k$, $r>1$ is odd, and $2^{l-k}r>s$.

\end{enumerate}
\end{enumerate}
\end{thm}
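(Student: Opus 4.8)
The plan is to deduce every part of this theorem from the two main results, Theorem 4 and Theorem 5, together with the single observation that, since the vertices form one periodic orbit of length $v$, each vertex is itself a point of minimum period $v$. The proof is then just a matter of translating the divisibility and parity hypotheses appearing here into the hypotheses of those two theorems; no new dynamics are required.

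First I would dispose of the purely $2$-power periods. For part (1), with $v=2^k$, observe that $2^k$ fails to divide $2^l$ precisely when $l<k$; for each such $l$, Theorem 4 supplies a point of minimum period $2^l$, and the remaining case $l=k$ is covered by the vertex orbit, yielding all $l\le k$. For part (2a), with $v=2^ks$ and $s>1$ odd, the factor $s$ carries an odd prime, so $v$ divides no power of $2$ at all; hence Theorem 4 applies for every positive integer $l$ and produces a point of minimum period $2^l$ in each case.

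Next I would handle the parts carrying an odd factor. For part (2b), Theorem 5 already gives a point of minimum period $2^kr$ for every $r>s$, in particular for every odd $r>s$; the boundary case $r=s$ is again furnished directly by the vertex orbit, whose minimum period is $v=2^ks$. For part (2c) the decisive move is a reindexing: writing $R=2^{l-k}r$, the target period factors as $2^lr=2^kR$. Since $l>k$ the integer $R$ is even, and the hypothesis $2^{l-k}r>s$ is exactly the statement $R>s$, so Theorem 5, applied with $R$ in the role of its parameter $r$, yields a point of minimum period $2^kR=2^lr$.

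There is no serious obstacle here, since the whole content lives in the earlier theorems. The only points that need care are the equality boundary cases---$l=k$ in (1) and $r=s$ in (2b)---which lie outside the strict hypotheses $l<k$ and $r>s$ of Theorems 4 and 5 and so must be supplied instead by the vertices' own orbit, and the reindexing $2^lr=2^k(2^{l-k}r)$ in (2c), where one must verify that the even integer $2^{l-k}r$ genuinely exceeds $s$ so that Theorem 5 is indeed applicable.
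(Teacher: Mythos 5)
Your proof is correct and takes essentially the same route as the paper, which simply deduces parts (1) and (2a) from Theorem 4 and parts (2b) and (2c) from Theorem 5, the latter via exactly your reindexing $2^l r = 2^k\bigl(2^{l-k}r\bigr)$ with $2^{l-k}r>s$. You are in fact more careful than the paper's two-line proof: the boundary cases $l=k$ in (1) and $r=s$ in (2b) do fall outside the hypotheses of Theorems 4 and 5 and must be supplied by the vertex orbit itself, a point the paper leaves implicit.
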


\begin{proof}
The statements involving points with period $2^l$ follow immediately from Theorem $4$. 

The last two statements follow immediately from Theorem $5$.

\end{proof}

We now give the corresponding result for trees, but first we need to introduce the concept of {\em removing $1$s from the right}.

This process of removing $1$s from the right can be described as follows.

\begin{enumerate}
\item Write $v$ in binary.
\item Change the rightmost $1$ in its expansion to zero. 
\item Repeat the process until you end with $0$.
\end{enumerate}

 For example, $31$ has binary expansion $11111$. Applying the process to this number yields the following binary expansions $11110$, $11100$, $11000$, $10000$ and $00000$, or in decimal notation $30$, $28$, $24$,  $16$ and $0$.

We can now state the theorem for trees.

\begin{thm} 

Let $T$ be a tree with $v$ vertices. Let $f: T \rightarrow T$ be a map with the property that the vertices form one periodic orbit. Then: 
\begin{enumerate}
\item If $v=2^k$, then there must be periodic points of minimum period $2^l$ for any $l \leq k$.
\item If $v=2^ks$, where $s>1$ is odd, then

\begin{enumerate}
\item there are periodic points with minimum period $2^l$ for all positive integers $l$,
\item there are periodic points with minimum period $2^kr$ for any $r\geq s$ and $r$ is odd.
\item there are periodic points with minimum period $2^lr$ for all $l$ and $r$ satisfying: $l>k$, $r>1$ is odd, and $2^{l-k}r>s$.

\item The map $f$  also has periodic orbits of any minimum period $m$ where $m$ can be obtained from $v$ by removing $1$s from the right of the binary expansion of $n$ and changing them to zeros.
\end{enumerate}
\end{enumerate}
\end{thm}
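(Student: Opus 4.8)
The plan is to separate the statement into the part already contained in the graph results and the genuinely new part, (2)(d). A tree has no independent cycles, so $c=n-(v-1)=0$ and every map on it is vacuously HTC; consequently Theorems 4 and 5 apply verbatim. Statements (1), (2)(a), (2)(b), (2)(c) are then exactly the conclusions of Theorems 4 and 5 specialized to a tree, and I would dispose of them in one line by citing those theorems. All the real work is in (2)(d): the claim that every $m$ obtained by repeatedly changing the lowest-order $1$ in the binary expansion of $v$ to $0$ (i.e.\ every proper truncation of $v$ keeping a high-order block of its bits) is a minimum period.

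The first step for (2)(d) is to record the exact trace available on a tree. Applying Lemma 4 to $f^m$, whose permutation $\theta^m$ fixes no vertex precisely when $v\nmid m$ (since $\theta$ is a single $v$-cycle), together with Lemma 3 ($M^m=\OMM(f^m)$), gives
\[
\tr(M^m)=-1 \quad\text{for every } m \text{ with } 0<m<v .
\]
In particular every proper truncation $m$ of $v$ satisfies $\tr(M^m)=-1$, so there is an edge $E$ carrying a \emph{negatively} oriented closed walk of length $m$ from $E$ to itself. This already yields a periodic point of period $m$; the remaining task is to upgrade ``period $m$'' to ``minimum period $m$.''

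To do this I would run the prime-walk rearrangement of Theorem 5 with a base length adapted to $m$. Write $m=2^ab$ with $b$ odd. If $m$ is a power of $2$ (the truncation $2^{i_1}$ keeping only the top bit) then, since $v$ is not a power of $2$ in case (2), we have $0<m<v$, and Theorem 4 delivers minimum period $m$ directly. Otherwise $m$ keeps at least two bits, so its lowest bit $a$ is strictly below the top bit $i_1$, whence $0<2^a<v$ and $\tr(M^{2^a})=-1$ supplies a negative closed walk of length $2^a$ that is automatically prime because its length is a power of $2$. Taking a negative closed walk of length $m$ and applying the decomposition-and-reordering argument of Theorem 5 --- using the odd parity of $b$ to rule out even repetitions, and using the prime length-$2^a$ walk as the filler block in the one-prime-walk subcase --- produces a \emph{non}-repetitive negative closed walk of length exactly $m$. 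Finally I would exclude the degenerate possibility that the detected fixed point of $f^m$ is a vertex: on a tree there is a unique reduced path between any two vertices, so the endpoint analysis in the last paragraph of the proof of Theorem 5 carries over unchanged, using that a vertex has minimum period $v$ while the filler block has length at most $2^a<v$.

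The main obstacle is precisely this last paragraph: simultaneously guaranteeing that the negative closed walk of length $m$ is genuinely prime (not an odd repetition of a shorter negative walk) and that the periodic point it detects is not a vertex, for the truncations that are not powers of $2$. This is where the tree hypothesis does real work. The exact identity $\tr(M^m)=-1$ for \emph{all} $0<m<v$, combined with the parity of the odd part $b$, lets the rearrangement succeed for every truncation with no lower bound on the size of $m$; by contrast the graph argument of Theorem 5 only controls lengths $2^{l}r$ with $2^{l-k}r>s$, and it is exactly that range restriction which forces the graph result (2)(c) to miss the smaller truncation-periods that (2)(d) recovers.
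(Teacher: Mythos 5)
Your reduction of (1) and (2)(a)--(c) is correct and matches the paper exactly: a tree is contractible, so every vertex map on it is HTC, and Theorems 4 and 5 apply verbatim --- this is precisely how the paper derives the parallel clauses of Theorem 6. Be aware, though, that the paper never proves part (2)(d) at all: Theorem 7 is stated without proof as a restatement of the tree result of \cite{B3}, quoted for comparison with the graph theorems. So the entire weight of your proposal rests on your argument for (2)(d), and that argument has a genuine gap.

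The gap is the same-edge problem. In the proof of Theorem 5, the rearrangement machinery works because the negative closed walks of lengths $2^k$ and $2^k(s+1)$ are based at the \emph{same} edge: Theorem 3 gives the matrix identity $(M^{2^k})^{s+1}=M^{2^k}$, so the very same diagonal entry is negative in both powers. For a truncation $m=2^ab$ of $v$ there is no analogous identity: since $0<m-2^a<v$, the exponents $m$ and $2^a$ are not congruent modulo the order of $\theta$, so $M^m$ and $M^{2^a}$ are unrelated matrices, and $\tr(M^{2^a})=-1$ only produces a negative $2^a$-walk based at \emph{some} edge, not at the edge $E$ carrying your negative $m$-walk. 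Your construction therefore breaks in exactly the case it was designed to handle: if the negative walk of length $m$ is repetitive and decomposes as $P^t$ for a \emph{single} prime $P$ (necessarily with $t\ge 3$ odd), you need a second closed walk at $E$ to serve as the filler block, and nothing in the paper's machinery supplies one --- this is precisely the subcase where Theorem 5 invokes the same-edge $2^k$-walk, which you do not have. (A smaller slip: a negative closed walk of length $2^a$ is automatically \emph{non-repetitive}, since an even repetition would be positively oriented, but it is not automatically \emph{prime}; it could be a concatenation of distinct shorter closed walks of opposite orientations. The paper maintains this distinction carefully in Theorem 5.) Repairing (2)(d) requires the finer arguments of \cite{B2, B3}, which is presumably why the paper cites the result rather than reproving it.
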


However, the above theorems shows that  if we look at the set of periods given by our results for graphs with $v$ vertices, set of periods given in \cite{B3} for trees with $v$ vertices and the set of integers that are forced by $v$ in the Sharkovsky ordering, they will differ by at most a finite number of integers, all of which will be less than $v$. All three orderings agree on the integers that are greater than $v$ and forced by $v$. For example if $v=30$, the periods that are less than $v$ and forced by Sharkovky's theorem are $1, 2, 4, 8, 16, 12, 20, 28, 24$; the periods that are less than $v$ and forced by the tree ordering are $1, 2, 4, 8, 16, 24, 28$; and the periods that are less than $v$ forced by Theorem $6$ are $1, 2, 4, 8, 16$. 
 
In \cite{S} and \cite{B2, B3} what are sometimes called the converses are shown. That is examples are constructed for each positive integer $v$ that have their set of minimum periods being exactly the set of periods given by the forcing relation. For HTC maps we do not have this. It is an interesting open question to ask whether there exists an HTC map of a graph with $v$ vertices such that the vertices form one periodic orbit and such that there does not exist a periodic point of period $m$ where $m$ is forced by $v$ in the tree ordering. For example, does there exist an HTC map of a graph with $30$ vertices that does not have a periodic points with minimum periods of $24$ or $28$.

\end{document}